\theoremstyle{plain}
   \newtheorem{theorem}{Theorem}[section]
   \newtheorem{lemma}[theorem]{Lemma}
   \newtheorem*{theorem*}{Theorem}
   \newtheorem*{main}{Main Theorem}
\theoremstyle{definition}
   \newtheorem{remark}[theorem]{Remark}
\numberwithin{equation}{section}
\newcommand\Symm{\mathfrak{S}}
\newcommand{\boldf}{\mathbf{f}}
\newcommand{\type}[1]{\mathrm{#1}}
\newcommand\CC{{\mathbb{C}}}
\newcommand\ZZ{{\mathbb{Z}}}
\newcommand\RR{{\mathbb{R}}}
\newcommand{\BB}{\mathcal{B}}
\renewcommand{\tt}{\mathbf{t}}
\newcommand{\til}{\widetilde}
\newcommand{\id}{\varepsilon}
\begin{document}

\title[The Hurwitz action in complex reflection groups]{A note on the Hurwitz action on reflection factorizations of Coxeter elements in complex reflection groups}
\author{Joel Brewster Lewis}
\date{\today}

\begin{abstract}
We show that the Hurwitz action is ``as transitive as possible'' on reflection factorizations of Coxeter elements in the well generated complex reflection groups $G(d, 1, n)$ (the group of $d$-colored permutations) and $G(d, d, n)$.
\end{abstract}

\maketitle

\section{Introduction}
Given a group $G$ with a generating set $T$ that is closed under conjugation, the \emph{braid group} $\BB_m := \langle \sigma_1, \ldots, \sigma_{m - 1} \mid \sigma_i \sigma_{i + 1} \sigma_i = \sigma_{i + 1} \sigma_i \sigma_{i + 1} \rangle$ on $m$ strands acts on $T^m$ via
\[
\begin{array}{rccl}
\big(t_1, \ldots, t_{i - 1}, \quad & t_i, \quad & t_{i + 1}, &\quad t_{i + 2}, \ldots, t_m\big) \overset{\sigma_i}{\longmapsto} \\[2pt]
\big(t_1, \ldots, t_{i - 1}, \quad &  t_{i + 1}, \quad & (t_i)^{t_{i + 1}}, &\quad t_{i + 2}, \ldots, t_m\big),
\end{array}
\]
where $ (t_i)^{t_{i + 1}} :=   t_{i + 1}^{-1} \cdot t_i \cdot t_{i + 1}$ represents conjugation.  The individual moves $\sigma_i$ are called \emph{Hurwitz moves}, and the entire action is called the \emph{Hurwitz action}.  Given a tuple $\tt \in T^m$, the product of the elements of $\sigma_i(\tt)$ is equal to the product of the elements of $\tt$, so that the Hurwitz action may be viewed as an action on \emph{$T$-factorizations} of a given element $c = t_1 \cdots t_m$ in $G$.  Moreover, the Hurwitz action clearly preserves the set of conjugacy classes of the tuple $\tt$ on which it acts.  In general, there may be multiple orbits of factorizations of a given element $c$ with fixed tuple of conjugacy classes under the Hurwitz action, but in \cite[Conj.~6.3]{LReiner}, it was conjectured that if $G$ is a \emph{well generated complex reflection group} 
and $c$ is a \emph{Coxeter element} in $G$, then the multiset of conjugacy classes of the factors in a factorization completely determines the Hurwitz orbit to which it belongs.  The purpose of this note is to prove the conjecture for the two combinatorial families of well generated complex reflection groups.
\begin{main}
Fix integers $n \geq 2$, $d \geq 2$, let $G$ be either of the complex reflection groups $G(d, 1, n)$ and $G(d, d, n)$, and let $c$ be a Coxeter element in $G$.  Then two reflection factorizations of $c$ are in the same Hurwitz orbit if and only if they have the same multiset of conjugacy classes of reflections.
\end{main}
The remainder of this document is structured as follows:
In Section~\ref{sec:background}, we give the mathematical context and background for the present work.  In Section~\ref{sec:CRG}, we introduce the main objects of our study: the infinite families of complex reflection groups and their generic covers.  In Section~\ref{sec:proof}, we prove the main result.  Finally, in Section~\ref{sec:exceptionals}, we make some remarks about the exceptional complex reflection groups (those that do not belong to the infinite family).

\section{Context and background}
\label{sec:background}

We begin with a review of some previous work on the Hurwitz action, much of which takes place in the context of reflection groups (either complex or Coxeter).

The original motivations for the study of the Hurwitz action are geometric: it was introduced by Hurwitz \cite{Hurwitz} as part of his study of covering surfaces of the Riemann sphere.  In that context, the group $G$ under consideration is the symmetric group $\Symm_n$ and the allowable factors $T$ are the transpositions, which case has been the subject of much further study (e.g., \cite{Kluitmann, BenItzhak-Teicher}).  More generally, as part of his study of the geometry of hyperplane arrangements, Bessis considered the case that $G$ is a complex reflection group and $T$ is the set of reflections in $G$.  (These notions are defined in the next section.)
\begin{theorem}[{Bessis \cite[Prop.~7.6]{BessisKpi1}}]
\label{thm:bessis}
Let $G$ be a well generated complex reflection group and $c$ a Coxeter element in $G$.  Then the Hurwitz action is transitive on minimum-length reflection factorizations of $G$.
\end{theorem}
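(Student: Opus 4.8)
The plan is to reinterpret reduced factorizations as maximal chains in an interval of the absolute order and to reduce global transitivity to a purely local statement in rank-two intervals, the passage from local to global being powered by a lattice property. First I would record that every minimum-length reflection factorization of $c$ has length equal to the rank $n$ of $G$, and that a factorization $c = t_1 \cdots t_n$ is equivalent to the maximal chain $\id = w_0 \lessdot w_1 \lessdot \cdots \lessdot w_n = c$ in the \emph{absolute order} (the partial order with $u \leq_T v$ when $\ell_T(u) + \ell_T(u^{-1}v) = \ell_T(v)$), where $w_i := t_1 \cdots t_i$ and $\ell_T$ denotes reflection length. The set of such chains is exactly the set of maximal chains of the interval $[\id, c]$, the noncrossing partition lattice $\mathrm{NC}(G,c)$. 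A direct computation shows that the Hurwitz move $\sigma_i$ fixes every partial product except $w_i$, which it replaces by $w_{i-1} t_{i+1}$; thus $\sigma_i$ acts only inside the rank-two interval $[w_{i-1}, w_{i+1}]$, permuting the single intermediate element of the chain.

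This reduces the theorem to two ingredients. The structural ingredient is that $\mathrm{NC}(G,c)$ is a \emph{lattice}; I would take this as the deep input, established for the real case by Brady--Watt and in general by Bessis through the dual braid monoid and the geometry of the Lyashko--Looijenga map and the discriminant. The combinatorial ingredient is \emph{local transitivity}: in each rank-two interval $[u, v]$ of $\mathrm{NC}(G,c)$, the Hurwitz action is transitive on the set of maximal chains, equivalently on the atoms of $[u, v]$. I would verify this by identifying each such interval with $\mathrm{NC}(G', c')$ for a rank-two well-generated complex reflection group $G'$ (a local parabolic-type group), in which the Hurwitz moves cyclically permute the atoms; the base cases are the rank-two groups $G(e,1,2)$, $G(e,e,2)$, and the finitely many exceptionals, each checked directly.

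For the local-to-global step I would argue by induction on the rank $n$, using the lattice property throughout. Given two maximal chains $C$ and $C'$, I would first move the coatom of $C$, by Hurwitz moves acting on the top portion of the chain, to agree with the coatom of $C'$: since $\mathrm{NC}(G,c)$ is a lattice, any two coatoms are joined by a path of rank-two intervals, and local transitivity realizes each elementary step as a genuine Hurwitz move. Once the top elements agree at a common coatom $y$, the remaining chains lie in the interval $[\id, y] \cong \mathrm{NC}(G_y, c_y)$ of rank $n-1$, and the inductive hypothesis finishes the argument. The main obstacle is unquestionably the lattice property: it is the one step with no elementary proof, requiring either Bessis's uniform geometric argument or a case-by-case verification (with computer assistance for the exceptional groups), and all of the local and inductive reasoning rests squarely on it.
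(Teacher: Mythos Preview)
The paper does not prove this theorem; it is cited from Bessis with the remark that the known proof is case-by-case, so there is no argument in the paper to compare against beyond that citation.

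Your reformulation of reduced factorizations as maximal chains in $\mathrm{NC}(G,c)$ and of Hurwitz moves as flips inside rank-two intervals is correct, but the local-to-global step has a genuine gap. You claim that ``since $\mathrm{NC}(G,c)$ is a lattice, any two coatoms are joined by a path of rank-two intervals''; the lattice property alone does not imply this. For a counterexample, take the graded rank-$3$ lattice with six atoms $a_1,\ldots,a_6$ and six coatoms $c_1,\ldots,c_6$ in which the atom--coatom covering relations form two disjoint hexagons (one on $\{a_1,a_2,a_3,c_1,c_2,c_3\}$, the other on $\{a_4,a_5,a_6,c_4,c_5,c_6\}$). One checks directly that this is a lattice and that every rank-$2$ interval has exactly two middle elements, yet $c_1 \wedge c_4 = \hat 0$ and no sequence of single-element flips carries a maximal chain through $c_1$ to one through $c_4$. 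Chain connectivity does hold in $\mathrm{NC}(G,c)$, but it requires structure beyond the bare lattice axiom---for instance EL-shellability, or a direct argument that the atom graph is connected using the reflection representation---and your outline does not supply that ingredient.

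There is also the circularity you half-acknowledge: for the genuinely complex well-generated groups, the only known proof that $\mathrm{NC}(G,c)$ is a lattice is Bessis's, obtained in the same paper from the same analysis of the Lyashko--Looijenga morphism that yields Hurwitz transitivity directly (and that analysis itself rests on a numerical coincidence verified case-by-case). Taking the lattice property as input therefore does not bypass the hard step; it assumes a result of equal depth and of the same provenance.
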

This result was proved on a case-by-case basis, using the classification of complex reflection groups.  The corresponding result for arbitrary Coxeter groups was proved by Igusa and Schiffler \cite{IgusaSchiffler}, and given a short, elegant proof in \cite{BDSW}.  

In the case in finite Coxeter groups (synonymously, finite real reflection groups), much is known about the structure of the Hurwitz orbits of reflection factorizations.  The elements with the property that the Hurwitz action is transitive on their minimum-length reflection factorizations are completely classified \cite{BGRW}.  In \cite{LReiner}, the present author and Reiner showed that two reflection factorizations (of any length) of a Coxeter element in a finite Coxeter group belong to the same Hurwitz orbit if and only if they share the same multiset of conjugacy classes.  This result was recently extended by Wegener and Yahiatene.
\begin{theorem}[{Wegener--Yahiatene \cite[Thm.~1.2]{WY}}]
\label{thm:WY}
Let $G$ be an arbitrary Coxeter group and $c$ a Coxeter element in $G$. Then two reflection factorizations of $c$ lie in the same Hurwitz orbit if and only if they share the same multiset of conjugacy classes.
\end{theorem}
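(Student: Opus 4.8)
The plan is to prove the nontrivial direction by reducing an arbitrary reflection factorization of $c$ to a normal form that depends only on its multiset of conjugacy classes; the converse (``only if'') direction is immediate, since a Hurwitz move replaces $(t_i, t_{i+1})$ by $(t_{i+1}, (t_i)^{t_{i+1}})$ and $(t_i)^{t_{i+1}}$ is conjugate to $t_i$, so the multiset of conjugacy classes is constant along any orbit. For the ``if'' direction I would induct on the length $m$ of the factorization, the base case being $m = n$, where $n := \ell_T(c)$ is the reflection length of $c$; this case is exactly the transitivity of the Hurwitz action on reduced factorizations of $c$, i.e. the Igusa--Schiffler theorem, which holds for all Coxeter groups and is quoted above. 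In particular all reduced factorizations of $c$ form a single Hurwitz orbit, so they share one multiset $M_0$ of conjugacy classes, and the reflections occurring in any one of them generate $G$ (Hurwitz moves preserve the subgroup generated by the factors, and the simple reflections generate $G$).

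The inductive step rests on three lemmas, of which the first is the heart of the argument: a \emph{reduction lemma} asserting that if $m > n$ then the factorization is Hurwitz-equivalent to one in which two adjacent factors coincide. To prove it I would track the reflection lengths $\ell_i := \ell_T(t_1 \cdots t_i)$; these start at $0$, end at $n$, and change by $\pm 1$ at each step, so when $m > n$ there is a first index $i$ at which the length drops. Then $w := t_1 \cdots t_{i-1}$ is a \emph{reduced} factorization with $\ell_T(w t_i) < \ell_T(w)$, and by the strong exchange property for reflection length there is an index $j < i$ with $w t_i = t_1 \cdots \widehat{t_j} \cdots t_{i-1}$, which rearranges to $t_i = (t_j)^{t_{j+1} \cdots t_{i-1}}$. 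Sliding $t_j$ rightward past $t_{j+1}, \ldots, t_{i-1}$ by Hurwitz moves conjugates it into precisely $t_i$ and deposits it immediately to the left of $t_i$, producing the desired adjacent repeated pair.

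The remaining ingredients are elementary Hurwitz computations. An adjacent repeated pair is \emph{transparent}: the moves realizing $(u, t, t) \mapsto (t, t, u)$ leave all entries unchanged (the passing reflection is conjugated by $t^2 = 1$), so a pair may be slid to any position and two pairs interchanged freely. A pair is also \emph{mobile within its class}: the moves realizing $(t, t, v) \mapsto (v, (t)^{v}, (t)^{v})$ conjugate it by a neighboring reflection $v$ while leaving $v$ intact, so a detached pair may be conjugated by any reflection occurring elsewhere. Assembling these with the reduction lemma, I would peel repeated pairs off one at a time until the factorization becomes a reduced factorization $R$ of $c$ (length $n$) followed by $(m-n)/2$ transparent pairs. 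Since $R$ is reduced its class multiset is $M_0$ and its reflections generate $G$, so each pair may be conjugated to any representative of its class; meanwhile the classes of the pairs account for the even multiset $M \setminus M_0$, whose content is determined by $M$. Standardizing $R$ via Igusa--Schiffler, standardizing each pair to a fixed class representative via mobility, and sorting the pairs by class via transparency, I reach a normal form depending only on $M$, whence two factorizations with equal $M$ lie in one Hurwitz orbit.

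The main obstacle is the reduction lemma, and within it the strong exchange property for reflection length in an \emph{infinite} Coxeter group: the finite-group proofs lean on the root system and the formula $\ell_T(w) = \codim \mathrm{Fix}(w)$, and transporting this to the possibly indefinite, infinite reflection representation (where reduced factorizations should correspond to linearly independent roots) is the delicate point. A secondary subtlety is arranging the induction so that the core produced by repeated peeling is genuinely reduced and therefore generates $G$ — the fact that licenses the class-mobility step — which is ultimately guaranteed by the Igusa--Schiffler transitivity quoted above.
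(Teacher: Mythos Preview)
The paper does not prove Theorem~\ref{thm:WY}; it is quoted from \cite{WY} as background and used as a black box in the $G(d,d,n)$ case of the Main Theorem. There is therefore no proof here to compare your proposal against.

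That said, your outline has a genuine gap. The ``strong exchange property for reflection length'' on which your reduction lemma rests is false, already in finite Coxeter groups. In $\Symm_4$, take the $T$-reduced expression $w = (1\;2)(2\;3)(3\;4) = (1\;2\;3\;4)$ and the reflection $t = (1\;3)$. Then $wt = (1\;4)(2\;3)$ has $\ell_T(wt) = 2 < 3 = \ell_T(w)$, yet $wt$ is not obtained by deleting any one factor: the three deletions yield $(2\;3\;4)$, $(1\;2)(3\;4)$, and $(1\;2\;3)$. Equivalently, $(1\;3)$ equals none of $(3\;4)$, $(2\;3)^{(3\;4)} = (2\;4)$, or $(1\;2)^{(2\;3)(3\;4)} = (1\;4)$. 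This configuration really occurs as the first length-drop in a factorization of the Coxeter element: the tuple $\big((1\;2),(2\;3),(3\;4),(1\;3),(1\;3)\big)$ factors $(1\;2\;3\;4)$ and first drops at position $4$, where your argument stalls. So the obstacle you flagged is not merely the passage from finite to infinite groups; the naive exchange statement already fails in $\Symm_4$.

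The overall architecture you sketch---produce an adjacent repeated pair, exploit that such pairs are transparent and class-mobile, peel pairs off until a reduced core remains, then invoke transitivity on the core---is indeed the shape of the argument in \cite{WY}. The substantive content of that paper is exactly the reduction lemma, and their proof does not proceed via a strong exchange condition for $\ell_T$; it uses structural results specific to (quasi\nobreakdash-)Coxeter elements and reflection subgroups, in the spirit of \cite{BDSW}, to manufacture the repeated pair. Your proposal correctly locates the hard step but underestimates what is needed to carry it out.
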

When not restricted to reflection factorizations, the Hurwitz action has been completely analyzed for dihedral groups and some related families \cite{HouQuaternion, SiaDihedral, BergerDihedral}.

Though the present work is not focused on enumeration, we should mention that reflection factorizations of Coxeter elements were enumerated by Chapuy and Stump, with a beautiful uniform formula, proved case-by-case \cite{ChapuyStump}.  The Chapuy--Stump formula was proved uniformly for Weyl groups by Michel \cite{Michel}, and was further refined by DelMas, Hameister, and Reiner \cite{DMHR}.  Subsequently, these enumerations have been generalized to all regular elements and given a uniform proof by Douvropoulos \cite{TheoEnumeration}.

\section{Complex reflection groups}
\label{sec:CRG}

\subsection{Basic definitions, classification, Coxeter elements}

In this section, we give relevant background on complex reflection groups.  For a more thorough treatment (though not necessarily in the same notation), see \cite[Chs.~2, 11, 12]{LehrerTaylor}

Given a finite-dimensional complex vector space $V$, a \emph{reflection} is a linear transformation $t: V \to V$ whose fixed space $\ker(t - 1)$ is a hyperplane (i.e., has codimension $1$), and a finite subgroup $G$ of $GL(V)$ is called a \emph{complex reflection group} if $G$ is generated by its subset $T$ of reflections.\footnote{Often in the literature it is required \emph{a priori} that reflections have finite order, or that $G$ be a group of unitary transformations.  We omit these conditions because they do not affect the resulting classification.}  For example, if $d$, $e$ and $n$ are positive integers, then the group 
\[
G(de, e, n) := \left\{ \begin{array}{c} n \times n \textrm{ monomial matrices whose nonzero entries are} \\
\textrm{$(de)$th roots of unity with product a $d$th root of unity} \end{array} \right\}
\]
is a complex reflection group acting on $\CC^n$: writing $\omega = \exp(2\pi i/ de)$ for the primitive $(de)$th root of unity, the reflections are the transposition-like reflections
\begin{equation}
\label{eq:transposition}
\left[
\begin{array}{ccccccccccc}
1 &&&&&&&&&& \\
& \ddots &&&&&&&& \\
&& 1 &&&&&&&& \\
&&& &&&& \omega^k &&& \\
&&&& 1 &&&&&& \\
&&&&& \ddots &&&&& \\
&&&&&& 1 &&&& \\
&&& \overline{\omega}^{k} &&&&&&& \\
&&&&&&&& 1 && \\
&&&&&&&&& \ddots & \\
&&&&&&&&&& 1
\end{array}
\right]
\end{equation}
of order $2$, fixing the hyperplane $x_i = \omega^k x_j$, and the diagonal reflections
\begin{equation}
\label{eq:diagonal}
\left[
\begin{array}{ccccccc}
1 &&&&&& \\
& \ddots &&&&& \\
&& 1 &&&& \\
&&& \omega^{ek} &&& \\
&&&& 1 && \\
&&&&& \ddots & \\
&&&&&& 1
\end{array}
\right]
\end{equation}
of various orders, fixing the hyperplane $x_i = 0$.  It is natural to represent such groups combinatorially: the group $G(de, 1, n)$ is isomorphic to the \emph{wreath product} $\ZZ/de\ZZ \wr \Symm_n$ of a cyclic group with the symmetric group $\Symm_n$.  The elements of $\ZZ/de\ZZ \wr \Symm_n$ are pairs $[w; a]$ with $w \in \Symm_n$ and $a = (a_1, \ldots, a_n) \in (\ZZ/de\ZZ)^n$, with product given by 
\[
[w; a] \cdot [u; b] = [wu; u(a) + b], \quad \textrm{ where } \quad u(a) := \left( a_{u(1)}, \ldots, a_{u(n)} \right).
\]
Under this isomorphism, $w$ is the underlying permutation of the monomial matrix corresponding to $[w; a]$, while $a_k$ is the exponent to which $\omega = \exp\left(2\pi i / de\right)$ appears in the nonzero entry in the $k$th column.    For $S \subseteq \{1, \ldots, n\}$, we say that $\sum_{k \in S} a_k$ is the \emph{weight} of $S$; this notion will come up particularly when the elements of $S$ form a cycle in $w$.  When $S = \{1, \ldots, n\}$, we call $a_1 + \ldots + a_n$ the \emph{weight} of the element $[w; a]$.

We denote by $\id$ the identity permutation in the symmetric group $\Symm_n$, so the diagonal reflection in \eqref{eq:diagonal} corresponds to $[ \id; (0, \ldots, 0, ek, 0, \ldots, 0)]$.  In the case of the transposition-like reflections, we condense the wreath-product notation even further and write $[(i \; j); k]$ for the reflection in \eqref{eq:transposition}, rather than the longer $[(i \; j); (0, \ldots, 0, -k, 0, \ldots, 0, k, 0, \ldots, 0)]$.  

Complex reflection groups were classified by Shephard and Todd \cite{ShephardTodd}: every \emph{irreducible} complex reflection group is either isomorphic to some $G(de, e, n)$ or to one of $34$ exceptional examples, and every complex reflection group is a direct sum of irreducibles.  In the present work we focus on the groups $G(de, e, n)$, but we briefly discuss the exceptional groups in Section~\ref{sec:exceptionals}.

An element $g$ of a complex reflection group is called \emph{regular} if it has an eigenvector that does not lie on any of the fixed hyperplanes of any of the reflections in $G$.  A \emph{Coxeter element} in $G$ is a regular element of multiplicative order $h := \frac{|T| + |A|}{n}$, where $T$ is the set of reflections in $G$, $A$ is the set of reflecting hyperplanes, and $n$ is the dimension of the space on which $G$ acts.\footnote{In different sources, one finds other, not-necessarily equivalent definitions of Coxeter elements.  For example, in \cite{BessisKpi1, TheoEnumeration}, Coxeter elements are taken to be those for which the eigenvector can be chosen with eigenvalue $\exp(2 \pi i/h)$.  For more discussion, see \cite{RRS}.}  Not every complex reflection group contains Coxeter elements; those that do are called \emph{well generated} in some sources and \emph{duality groups} in others.  In the infinite family $G(de, e, n)$, the well generated groups are precisely $G(d, 1, n)$ and $G(d, d, n)$.  For example, the elements
\begin{equation}
\label{eq:std cox}
\begin{bmatrix}
&&&&\omega\\
1&&&&\\
&1&&&\\
&&\ddots&&\\
&&&1&
\end{bmatrix} 
\in G(d, 1, n)
\quad
\textrm{ and } 
\quad
\begin{bmatrix}
&&&\omega&\\
1&&&&\\
&\ddots&&&\\
&&1&& \\
&&&& \overline{\omega}
\end{bmatrix}
\in G(d, d, n) 
\end{equation}
are Coxeter elements in their respective groups, where $\omega = \exp(2\pi i/d)$. 

There are $d$ conjugacy classes of reflections in $G(d, 1, n)$: all the transposition-like reflections are conjugate to each other, while the diagonal reflections fall into $d - 1$ classes depending on their weight.  When $n = 2$, the group $G(d, d, n)$ is the dihedral group of order $2d$; thus, it has one conjugacy class of reflections if $d$ is odd and two if $d$ is even.  For $n \geq 3$, all reflections in $G(d, d, n)$ are conjugate to each other.

\begin{remark}
\label{rmk:one suffices}
In a Weyl group, all Coxeter elements are conjugate.  For other complex reflection groups, this is not necessarily the case; however, by \cite[Prop.~1.4]{RRS}, if $c$ and $c'$ are Coxeter elements in a complex reflection group $G$, then there exists a group automorphism of $G$ that sends $c$ to $c'$ and sends reflections to reflections.  Consequently, in order to prove the Main Theorem for all Coxeter elements it suffices to prove it for just one.
\end{remark}

\subsection{Generic covers}
\label{sec:generic}

As described in the previous section, the group $G(d, 1, n)$ is isomorphic to the wreath product $\ZZ/d\ZZ \wr \Symm_n$ of the cyclic group $\ZZ/d\ZZ$ by the symmetric group $\Symm_n$.  Consequently, for each $d$ there is a natural projection $\pi_d: G(\infty, 1, n) \to G(d, 1, n)$ from the infinite group $G(\infty, 1, n) := \ZZ \wr \Symm_n$ onto $G(d, 1, n)$ that reduces the weight vector $a$ in the element $[w; a]$ modulo $d$.  Moreover, this covering is compatible with the reflection group structure, in the following sense: we may view the elements of $G(\infty, 1, n)$ as monomial matrices whose nonzero entries are integer powers of a formal variable $x$, acting on the vector space $K^n$ where $K$ is an algebraically closed field containing $\CC$ and the formal variable $x$ (e.g., one could take $K$ to be $\overline{\CC(x)}$, or to be the Puiseux series in $x$).  The reflections are again the elements that fix a hyperplane, and again come in two families: the transposition-like reflections $[(i \; j); k] := [ (i \; j); (0, \dots, 0, -k, 0, \dots, 0, k, 0, \dots, 0)]$ for $i, j \in \{1, \ldots, n\}$, $k \in \ZZ$ and the diagonal reflections $[\id; (0, \dots, 0, k, 0, \dots, 0)]$ for $k \in \ZZ \smallsetminus \{0\}$.  These reflections generate $G(\infty, 1, n)$, and every reflection $r$ in $G(d, 1, n)$ has in its fiber $\pi_d^{-1}(r)$ reflections of $G(\infty, 1, n)$.  The converse is not quite true: the image $\pi_d(\til{r})$ of a reflection $\til{r}$ in $G(\infty, 1, n)$ is a reflection in $G(d, 1, n)$ unless $\til{r}$ is diagonal and has weight divisible by $d$, in which case the projection is the identity.
We may extend the definition of regular element to this setting; the element 
\begin{equation}
\label{eq:generic cox d1n}
\til{c} := 
\begin{bmatrix}
&&&&x\\
1&&&&\\
&1&&&\\
&&\ddots&&\\
&&&1&
\end{bmatrix} 
\in G(\infty, 1, n)
\end{equation}
with (right) eigenvector $(x, \dots, x^{2/n}, x^{1/n})^T$ is one example.  For any $d$, the image of $\til{c}$ under $\pi_d$ is the Coxeter element for $G(d, 1, n)$ that appears in \eqref{eq:std cox}, and for these reasons we say that $\til{c}$ is the \emph{standard Coxeter element} in $G(\infty, 1, n)$.

Just as $G(d, 1, n)$ has a subgroup $G(d, d, n)$, the wreath product $G(\infty, 1, n)$ has a subgroup $G(\infty, \infty, n)$ consisting of all elements of weight $0$.  For each $d$, the projection map $\pi_d$ restricts to a covering $\pi_d: G(\infty, \infty, n) \to G(d, d, n)$.  Moreover, it was shown by Shi \cite[Thm.~2.3]{Shi-generic} that in fact $G(\infty, \infty, n)$ is isomorphic to the affine symmetric group, the Coxeter group of affine type A.  This group has Coxeter presentation 
\[
\left\langle s_1, \ldots, s_{n - 1}, s_n = s_0 \middle |
\begin{array}{ll} 
s_i^2 = 1 & \textrm{ for } i = 1, \ldots, n \\
s_i s_j = s_j s_i & \textrm{ if } i \neq j \pm 1 \\
s_i s_{i + 1} s_i = s_{i + 1} s_i s_{i + 1} & \textrm{ for } i = 0, \ldots, n - 1
\end{array}
\right\rangle,
\] 
where concretely we can take the generators to be $s_i = [ (i \; i + 1); 0]$ for $i = 1, \ldots, n - 1$ and $s_0 = s_n = [(1 \; n); 1]$.  In an arbitrary Coxeter group, the reflections are defined to be the conjugates of the generators $s_i$; in $G(\infty, \infty, n)$, these coincide exactly with the reflections in $G(\infty, 1, n)$ that belong to $G(\infty, \infty, n)$, namely, the transposition-like reflections $[(i \; j); k]$.  Under the projection $\pi_d$, these are mapped surjectively onto the reflections in $G(d, d, n)$.  

In a Coxeter group, one defines a Coxeter element to be a product of the simple generators in some order.  For example, 
\begin{equation}
\label{eq:generic cox ddn}
\til{c} := 
s_0 s_1 \cdots s_{n - 1} = 
\begin{bmatrix}
&&&x&\\
1&&&&\\
&\ddots&&&\\
&&1&& \\
&&&& x^{-1}
\end{bmatrix}
\end{equation}
is a Coxeter element in $G(\infty, \infty, n)$ in this sense.  The element $\til{c}$ is a regular element in $G(\infty, \infty, n)$, with eigenvector $(x, \ldots, x^{2/(n - 1)}, x^{1/(n - 1)}, 0)^T$, and its image under $\pi_d$ is the Coxeter element for $G(d, d, n)$ that appears in \eqref{eq:std cox}; for these reasons we say that $\til{c}$ is the \emph{standard Coxeter element} in $G(\infty, \infty, n)$.

\begin{remark}
\label{rmk:LR}
When $n = 2$, the affine symmetric group $G(\infty, \infty, 2)$ is the infinite dihedral group, consisting of the isometries of the real line $\RR$ that preserve the integer lattice $\ZZ$.  (The standard Coxeter element in this case is translation by $1$.) This group was already considered in \cite{LReiner}, where it was used in the proof of the Main Theorem for finite real reflection groups.  In particular, \cite[\S\S 4.1, 4.3]{LReiner} imply that any reflection factorization $(t_1, \ldots, t_{2k + 1})$ of a reflection $t$ in $G(\infty, \infty, 2)$ is in the same Hurwitz orbit as a factorization of the form $(t'_1, t'_1, t'_3, t'_3, \ldots, t'_{2k - 1}, t'_{2k - 1}, t)$.
\end{remark}

\section{Proof of the main result}
\label{sec:proof}

In this section, we prove the Main Theorem.  The first step is the following lemma, which allows passing the relevant questions about $G(d, 1, n)$ and $G(d, d, n)$ to their generic covers.

\begin{lemma}
\label{lem:lifting}
Let $G_d$ be one of the groups $G(d, 1, n)$ and $G(d, d, n)$, and let $G_\infty$ be its generic cover.  Let $c \in G_d$ be the Coxeter element from \eqref{eq:std cox} and let $\til{c} \in G_\infty$ be the standard Coxeter element for $G_\infty$ that appears in \eqref{eq:generic cox d1n} or \eqref{eq:generic cox ddn}.  Then for any reflection factorization $(t_1, \ldots, t_k)$ of $c$ in $G_d$, there exists a reflection factorization $(\til{t}_1, \ldots, \til{t}_k)$ of $\til{c}$ in $G_\infty$ such that $\pi_d\left( \til{t}_i \right) = t_i$ for all $i$.
\end{lemma}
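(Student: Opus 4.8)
The plan is to lift the factors one at a time, tracking the partial products. Given the factorization $(t_1, \ldots, t_k)$ of $c$ in $G_d$, set $c_0 = \id$ and $c_i = t_1 \cdots t_i$ for $i = 1, \ldots, k$, so $c_k = c$. I will build reflections $\til t_i \in G_\infty$ and elements $\til c_i := \til t_1 \cdots \til t_i$ with $\pi_d(\til c_i) = c_i$, arranging moreover that each $\til c_i$ is chosen in a ``controlled'' way inside the fiber $\pi_d^{-1}(c_i)$ — for instance, by fixing a set-theoretic section that assigns to each element of $G_d$ a distinguished lift of minimal or prescribed total weight. The two endpoints are forced: $\til c_0 = \id$ is the lift of $c_0 = \id$, and I must check at the end that the lift produced for $c_k = c$ is exactly $\til c$. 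The inductive step is the crux: given a lift $\til c_{i-1}$ of $c_{i-1}$, and knowing $c_{i-1} t_i = c_i$, I need a reflection $\til t_i \in G_\infty$ with $\pi_d(\til t_i) = t_i$ and $\til c_{i-1} \til t_i$ equal to the prescribed lift of $c_i$.

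The key point making this possible is the description of fibers from Section~\ref{sec:generic}: every reflection $r$ of $G_d$ has reflections in its fiber, and in fact the transposition-like reflections $[(i\;j);k]$ of $G_\infty$ with $k$ ranging over a fixed residue class mod $d$ all project to a given transposition-like reflection of $G_d$, while a diagonal reflection $[\id;(0,\ldots,k,\ldots,0)]$ of $G_d$ lifts to $[\id;(0,\ldots,k',\ldots,0)]$ for any $k' \equiv k \pmod d$ with $k' \not\equiv 0$. Writing $\til c_{i-1}^{-1} \til c_i$ (where $\til c_i$ is the prescribed lift of $c_i$), this is an element of $G_\infty$ projecting to $t_i$, hence an element of $\pi_d^{-1}(t_i)$; I must verify that one can choose the section so that this element is actually a \emph{reflection} of $G_\infty$, not merely a general element of the fiber. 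For transposition-like $t_i$ this is automatic once the underlying permutation is right (the element of the fiber is forced to have the correct monomial shape and the two nonzero off-diagonal entries are automatically inverse powers of $x$ up to the overall weight, which lands in the correct class). For diagonal $t_i$ the only obstruction is that the weight could be $\equiv 0 \pmod d$, i.e. the lift could accidentally be the identity — but then $t_i$ itself would be the identity, contradicting that $(t_1,\ldots,t_k)$ is a reflection factorization. In the $G(d,d,n)$ case all reflections are transposition-like, so only the first situation arises.

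The main obstacle is making the choice of section globally consistent so that the forced final value $\til c_{i-1}^{-1}\til c_i$ really is a reflection at every step and so that the run ends at $\til c$. I expect the cleanest way is to not fix a section in advance but instead to run the induction greedily: at step $i$, pick any reflection $\til t_i$ lifting $t_i$ (possible by the fiber description), set $\til c_i = \til c_{i-1} \til t_i$, and only afterwards correct a global discrepancy. Since $\pi_d(\til c_k) = c = \pi_d(\til c)$, the element $\til c^{\,-1} \til c_k$ lies in $\ker \pi_d$, which (in the $G(d,1,n)$ case) consists of diagonal matrices with weights divisible by $d$ in each coordinate, and (in the $G(d,d,n)$ case) of such matrices with total weight $0$. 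One then absorbs this kernel element by modifying the lifts of finitely many factors — e.g. changing $[(i\;j);k]$ to $[(i\;j);k+d]$ along a cycle, or $[\id;(\ldots,k,\ldots)]$ to $[\id;(\ldots,k+d,\ldots)]$, which shifts partial products by kernel elements without leaving the prescribed fibers — until the telescoping product equals $\til c$ exactly. Checking that the needed modifications can always be realized by such weight-shifts is the one genuinely case-specific computation, and it is here that the structure of $\ker\pi_d$ as a full-rank (resp. corank-one) sublattice of the weight lattice is used.
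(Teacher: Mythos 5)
Your strategy---lift each factor greedily, note that the error $\til{c}^{\,-1}\,\til{t}_1\cdots\til{t}_k$ lies in $\ker\pi_d$, then absorb it by re-choosing finitely many lifts---is genuinely different from the paper's, and it can be made to work, but the step you set aside as ``the one genuinely case-specific computation'' is the entire mathematical content of the lemma, and your description of it misdiagnoses where the difficulty sits. Two points. First, replacing the lift $\til{t}_l$ by $\til{t}_l\cdot[\id;v]$ with $[\id;v]\in\ker\pi_d$ changes the total product by right-multiplication not by $[\id;v]$ but by its conjugate under the suffix $\til{t}_{l+1}\cdots\til{t}_k$, i.e.\ by $[\id;u(v)]$ where $u$ is the underlying permutation of that suffix. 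So the lattice of realizable corrections is not $\ker\pi_d$ itself; it is the sublattice generated by $d(e_{i'}-e_{j'})$ for the \emph{transported} index pairs (plus, in $G(d,1,n)$, vectors $de_{i'}$ from the diagonal factors). Whether this sublattice contains the actual discrepancy depends on the particular factorization, not on ``the structure of $\ker\pi_d$ as a full-rank (resp.\ corank-one) sublattice.'' Second, to show it does contain the discrepancy you need a connectivity argument: the transported pairs must form a connected graph on $\{1,\ldots,n\}$. This is true---the suffix-conjugated transpositions generate the same subgroup of $\Symm_n$ as the original underlying transpositions, and that subgroup is transitive---but transitivity in the $G(d,d,n)$ case requires observing that some factor must move $n$ (else every factor fixes $e_n$ and the product cannot be $c$), and in the $G(d,1,n)$ case you additionally need at least one diagonal factor to get off the root lattice and reach all of $d\ZZ^n$. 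None of this is in your write-up, and without it the absorption step is an assertion, not a proof.

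For comparison, the paper sidesteps the lattice-generation question entirely. It adjusts the lift of a \emph{single} well-chosen factor---the first diagonal one in $G(d,1,n)$, resp.\ the first one whose underlying permutation moves $n$ in $G(d,d,n)$ (the existence of such a factor being exactly the observation above)---so that the product $\til{c}'$ has the correct weight on each cycle of the underlying permutation. It then notes that any two lifts of $c$ with equal cycle weights are conjugate by an explicit diagonal element $\delta\in\ker\pi_d$, and conjugates the entire factorization by $\delta$; since $\pi_d(\delta)$ is the identity, the projections of the factors are unchanged. If you want to complete your version, either supply the transport-and-connectivity argument sketched above or switch to this one-factor-plus-global-conjugation device.
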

\begin{proof}
The first step is to build a reflection factorization of an element in $G_\infty$ that is ``close to'' $\til{c}$.  

If $G_d = G(d, 1, n)$, then every reflection factorization of $c$ contains diagonal reflections (since every product of transposition-like reflections belongs to $G(d, d, n)$, which $c$ does not); choose $i$ to be the smallest index such that $t_i$ is a diagonal reflection.  If instead $G_d = G(d, d, n)$, then every reflection factorization of $c$ contains a reflection whose underlying permutation is $(a, n)$ for some $a \in \{1, \ldots, n - 1\}$ (since all other reflections fix the standard basis vector $e_n$, which $c$ does not); choose $i$ to be the smallest index such that $t_i$ is such a reflection.  For each index $j \in \{1, \ldots, k\} \smallsetminus \{ i\}$, choose $\til{t}_j$ to be an arbitrary reflection in $\pi_d^{-1}(t_j)$.  Then choose $\til{t}_i$ as follows: if $G_d = G(d, 1, n)$, take $\til{t}_i$ to be the unique reflection in $\pi_d^{-1}(t_i)$ such that the weight of $\til{c'} := \til{t}_1 \cdots \til{t}_k$ is equal to $1$; if $G_d = G(d, d, n)$,  take $\til{t}_i$ to be the unique reflection in $\pi_d^{-1}(t_i)$ such that the weight of $n$ in $\til{c'} := \til{t}_1 \cdots \til{t}_k$ is $-1$.

By construction, $\til{c}$ and $\til{c'}$ have the same underlying permutation, corresponding cycles of $\til{c}$ and $\til{c'}$ have the same weight, and $\pi_d(\til{c}) = \pi_d(\til{c'}) = c$.  Thus we can write $\til{c} = [w; a]$ and $\til{c'} = [w; a']$ and we have $a - a' = d \cdot (b_1, \ldots, b_n)$ for some integers $b_1, \ldots, b_n$ such that $b_1 + \ldots + b_n = 0$; and moreover if $G_d = G(d, d, n)$ then also $b_n = 0$.  For $i = 1, \ldots, n$, define $b'_i = b_1 + \ldots + b_{i - 1}$ (so in particular $b'_1 = 0$), and let $\delta = [ \id; d\cdot (b'_1, b'_2, \ldots, b'_n)] \in G_\infty$.  If $G_d = G(d, 1, n)$ then $w = (1\; 2\; \cdots \; n)$ and so
\begin{align*}
\delta \til{c} \delta^{-1} &  = [w; (a_1 + db'_2 - db'_1, a_2 + db'_3 - db'_2, \ldots, a_{n - 1} + db'_n - db'_{n - 1}, a_n + db'_1 - db'_n)] \\
 & = [w; (a_1 + db_1, a_2 + db_2, \ldots, a_n + db_n)].
\end{align*} 
If instead $G_d = G(d, d, n)$ then $w = (1\; 2\; \cdots \; n - 1)(n)$ and so
\begin{align*}
\delta \til{c} \delta^{-1} &  = [w; (a_1 + db'_2 - db'_1,  \dots, a_{n - 2} + db'_{n - 1} - db'_{n - 2}, \ldots, a_{n - 1} + db'_1 - db'_{n - 1}, a_n)] \\
& = [w; (a_1 + db_1,  \dots, a_{n - 2} + db_{n - 2}, \ldots, a_{n - 1} + db_{n - 1}, a_n)].
\end{align*}
In both cases, the result of the conjugation is $\til{c'}$.  Moreover, since $\pi_d(\delta)$ is the identity element in $G_d$, it follows that
\[
\left( (\delta^{-1} \til{t}_1 \delta), \; \ldots, \; (\delta^{-1} \til{t}_k \delta)\right)
\]
is the desired reflection factorization of $\til{c}$.
\end{proof}

\begin{remark}
In general, the question of whether reflection factorizations of an element $g$ in $G(d, 1, n)$ or $G(d, d, n)$ can be lifted to the generic cover can be subtle.  One problem is that \emph{reflection length} is not preserved by projection: if $\til{g} \in G_\infty$ has minimum-length reflection factorizations of length $k$, it may be that its projection $\pi_d(\til{g})$ has shorter factorizations, that consequently cannot be lifted to factorizations of $\til{g}$.  A trivial example of this problem is the reflection $\til{g} = [\id; (d, 0, 0)] \in G(\infty, 1, 3)$ (having reflection length $1$), whose image is the identity in $G(d, 1, 3)$ (having reflection length $0$): the empty factorization of the identity cannot be lifted to a factorization of $\til{g}$.

Even when reflection length is preserved, it may not be possible to lift all factorizations.  For example, the central element $g = [ \id; (1, 1, 1)] = -1$ in $G(2, 1, 3)$ is the image of $\til{g} = [\id; (1, 1, 1)]$ in $G(\infty, 1, 3)$ under the projection $\pi_2$.  Both elements have reflection length $3$ (for example, they can be factored as the product of three diagonal reflections), but the reflection factorization
\[
g = [(1 \; 2); (0, 0, 0)] \cdot [(1 \; 2); (1, 1, 0)] \cdot [\id; (0, 0, 1)]
\]
is not the projection of any reflection factorization of $\til{g}$.  For more on reflection length in complex reflection groups, see \cite{Shi-reflection-length, FosterGreenwood}.
\end{remark}


To finish the proof of the Main Theorem, we handle the groups $G(d, d, n)$ and $G(d, 1, n)$ separately.

\subsection{The group $G(d, d, n)$}

If $n = 2$ then $G_d := G(d, d, n)$ is the dihedral group of order $2d$.  The Main Theorem was proved for this group (along with all other finite real reflection groups) in \cite{LReiner}.

Now suppose that $n > 2$.  In this case, the reflections in $G_d$ form a single conjugacy class, and so the statement to be proved is that for each Coxeter element $c$ in $G_d$, any two reflection factorizations of the same length belong to the same Hurwitz orbit.  Moreover, from the discussion in Remark~\ref{rmk:one suffices} we know that it suffices to prove the statement for the single Coxeter element $c := [(1 \; 2 \cdots n - 1)(n); (0, \ldots, 0, 1, -1)]$ in \eqref{eq:std cox}.  Let $\til{c} := [(1 \; 2 \cdots n - 1)(n); (0, \ldots, 0, 1, -1)]$ be the standard Coxeter element in $G_\infty := G(\infty, \infty, n)$.

Fix an integer $m$ and two length-$m$ reflection factorizations $\tt = (t_1, \ldots, t_m)$ and $\tt' = (t'_1, \ldots, t'_m)$ of $c$.  By Lemma~\ref{lem:lifting}, there exist reflection factorizations $\til{\tt} = \left( \til{t}_1, \ldots, \til{t}_m\right)$ and $\til{\tt}' = \left( \til{t}'_1, \ldots, \til{t}'_m\right)$ of $\til{c}$ in $G_\infty$ such that $\pi_d(\til{t}_i) = t_i$ and $\pi_d(\til{t}'_i) = t'_i$ for all $i \in \{1, \ldots, m\}$.  The reflections in $G_\infty$ form a single conjugacy class (the defining Coxeter relations can written $s_{i + 1} = (s_{i + 1} s_{i})^{-1} s_i (s_{i + 1} s_{i})$, so all $s_i$ are conjugate, and each reflection is conjugate to one of the $s_i$), hence by Theorem~\ref{thm:WY} of Wegener--Yahiatene, the factorizations $\til{\tt}$ and $\til{\tt}'$ belong to the same Hurwitz orbit.  Hurwitz moves clearly commute with the projection $\pi_d$, so the same braid $\beta$ that satisfies $\beta(\til{\tt}) = \beta(\til{\tt}')$ also satisfies $\beta(\tt) = \beta(\tt')$. This completes the proof.

\subsection{The group $G(d, 1, n)$}

Our proof for $G(d, 1, n)$ begins the same as for $G(d, d, n)$; however, because the generic cover $G_\infty:= G(\infty, 1, n)$ is not a Coxeter group, we cannot make use of Theorem~\ref{thm:WY}.  Consequently, we employ a more hands-on approach.

It follows from Remark~\ref{rmk:one suffices} that it suffices to prove the statement for the single Coxeter element $c := [(1 \; 2 \cdots n); (0, \ldots, 0, 1)]$ in $G(d, 1, n)$ shown in \eqref{eq:std cox}.  Let $\til{c} := [(1 \; 2 \cdots n); (0, \ldots, 0, 1)]$ be the standard Coxeter element in $G_\infty$.  Choose a reflection factorization $\tt = (t_1, \ldots, t_m)$ of $c$.  By Lemma~\ref{lem:lifting}, there exists a reflection factorization $\til{\tt} = (\til{t}_1, \ldots, \til{t}_m)$ of $\til{c}$ in $G_\infty$ such that $\pi_d(\til{t}_i) = t_i$ for all $i \in \{1, \ldots, m\}$.  The bulk of the proof is to produce a canonical representative of the Hurwitz orbit of $\til{\tt}$.

We may use Hurwitz moves to move all the diagonal reflections in $\til{\tt}$ before all the transposition-like reflections, and so without loss of generality we assume that $\til{t}_1, \ldots, \til{t}_k$ are diagonal and $\til{t}_{k + 1}, \ldots, \til{t}_m$ are transposition-like.  Apply the projection $\pi_1 : G_\infty \to \Symm_n$; denoting $\widehat{t}_i = \pi_1(\til{t}_i)$, we have that $\widehat{\tt} := (\widehat{t}_{k + 1}, \ldots, \widehat{t}_m)$ is a transposition factorization of the long cycle $\widehat{c} := \pi_1(\til{c}) = (1 \; 2 \cdots n)$.  It is not difficult to show\footnote{For example, it follows from \cite[Cor.~1.4 and Cor.~5.5]{LReiner}; but one can also derive it directly from the tree representation of factorizations in $\Symm_n$ \cite{Denes}.} that for any transposition $\widehat{t}$, there is a factorization $\widehat{\tt}'$ in the Hurwitz orbit of $\widehat{\tt}$ in which $\widehat{t}$ is the first factor.  By choosing a braid $\beta$ such that $\beta(\widehat{\tt}) = \widehat{\tt}'$ and applying it to the last $m - k$ coordinates of $\til{\tt}$, we may take $\til{t}_{k + 1}$ to have nonzero entries in any pair of off-diagonal positions that we like.  Consequently, we may apply the following sort of Hurwitz moves to arrange one diagonal reflection to have non-$1$ entry in the $(1, 1)$ position:
\[
\left(
\begin{bmatrix}
1 & 0 \\
0 & b
\end{bmatrix}
,
\begin{bmatrix}
0      & a \\
a^{-1} & 0
\end{bmatrix}
\right)
\overset{\sigma}{\longrightarrow}
\left(
\begin{bmatrix}
0      & a \\
a^{-1} & 0
\end{bmatrix}
,
\begin{bmatrix}
b & 0 \\
0 & 1
\end{bmatrix}
\right)
\overset{\sigma}{\longrightarrow}
\left(
\begin{bmatrix}
b & 0 \\
0 & 1
\end{bmatrix}
,
\begin{bmatrix}
0      & ab^{-1} \\
a^{-1}b & 0
\end{bmatrix}
\right).
\]
Since all diagonal reflections commute with each other, we may successively move each one into the $k$th position and apply the same procedure to give a factorization in the same Hurwitz orbit as $\tt$ in which all diagonal reflections have their unique nonzero weight in the first position.

The product of the transposition-like factors in $\tt$ belongs to the subgroup $G(\infty, \infty, n)$ of weight-$0$ elements.  Consequently, the sum of the weights of the diagonal factors must be equal to the weight of $\til{c}$, which is $1$.  This implies that, after performing the Hurwitz moves above, the product of the diagonal reflections is $[\id; (1, 0, \ldots, 0)]$ and the product of the transposition-like factors is the permutation matrix $\widehat{c}$.

We continue to focus on the suffix consisting of transposition-like factors.  By \cite[Thm.~1.1]{LReiner} (which has the same statement as our Main Theorem but in the case of finite real reflection groups) applied in $\Symm_n$, the factorization $\widehat{\tt} = (\widehat{t}_{k + 1}, \ldots, \widehat{t}_{m})$ has in its Hurwitz orbit a factorization in which the first $m - k - (n - 1)$ factors are all equal to the transposition $(1 \; 2)$ and the last $n - 1$ factors are $(1\; 2)$, $(2\; 3)$, \ldots, $(n - 1\; n)$, a minimal factorization of $\widehat{c}$.  (Incidentally, this implies that $m - k - (n - 1)$ is even.)  Choose a braid $\beta$ that has this effect on $\widehat{\tt}$; then 
\begin{multline*}
\beta( t_{k + 1}, \ldots, t_m )
=
\Big( 
\left[(1\; 2); a_{k + 1}\right],
\;
\ldots ,
\;
\left[(1\; 2); a_{m - n + 1}\right],
\\
\left[(1\; 2); a_{m - n + 2}\right],
\;
\left[(2\; 3); a_{m - n + 3)}\right],
\;
\ldots ,
\;
\left[(n - 1\; n); a_{m}\right]
\Big)
\end{multline*}
for some integers $a_{k + 1}, \ldots, a_m$.  Because everything is so explicit, by carrying out the multiplication we can immediately read off two facts: first, that $a_{m - n + 3} = \dots = a_{m} = 0$, and second, that
\begin{equation}
\label{infinite dihedral factorization}
\left[(1\; 2); a_{k + 1}\right]
\cdots 
\left[(1\; 2); a_{m - n + 2}\right]
=
[(1\; 2); 0].
\end{equation}
The subgroup of $G_\infty$ generated by the factors $[(1\; 2); a_i]$ for $k + 1 \leq i \leq m - n + 2$ is isomorphic to a subgroup of the infinite dihedral group $G(\infty, \infty, 2)$.  Therefore, applying the results described in Remark~\ref{rmk:LR} to \eqref{infinite dihedral factorization}, this subfactorization has in its Hurwitz orbit a factorization of the form
\begin{multline*}
\Big(
\left[(1\; 2); b_{k + 1}\right], \left[(1 \; 2); b_{k + 1}\right], 
\left[(1\; 2); b_{k + 3}\right], \left[(1\; 2); b_{k + 3}\right],
\ldots \\ \ldots, 
\left[(1\; 2); b_{m - n)}\right], \left[(1\; 2); b_{m - n)}\right],
[(1\; 2); 0] \Big) 
\end{multline*}
for some integers $b_{k + 1}, b_{k + 3}, \ldots, b_{m - n}$.

We now summarize our progress so far: given an arbitrary reflection factorization $\tt$ of the element $c$ in $G_d$ consisting of $k$ diagonal reflections and $m - k$ transposition-like reflections, we have selected a covering factorization $\til{\tt}$ of $\til{c}$ in $G_\infty$ and shown that $\til{\tt}$ is in the same Hurwitz orbit as a factorization $(\til{r}_1, \ldots, \til{r}_m)$ with the following properties:
\begin{enumerate}
\item $\til{r}_1, \ldots, \til{r}_k$ are diagonal reflections with nonzero weight in the first coordinate and product $[\id, (1, 0, \ldots, 0)]$;
\item there are integers $b_{k + 1}, b_{k + 3}, \ldots, b_{m - n}$ such that $\til{r}_{k + 1} = \til{r}_{k + 2} = [(1 \; 2); b_{k + 1}]$, $\til{r}_{k + 3} = \til{r}_{k + 4} = [(1 \; 2); b_{k + 3}]$, \ldots, $\til{r}_{m - n} = \til{r}_{m - n + 1} = [(1 \; 2); b_{m - n}]$; and
\item $\til{r}_{m - n + 2} = [(1 \; 2); 0]$, $\til{r}_{m - n + 3} = [(2 \; 3); 0]$, \ldots, $\til{r}_{m} = [(n - 1 \; n); 0]$.  
\end{enumerate}

In the next stage of the argument, we \emph{cable} the diagonal factors together, in the following sense.  Given a factorization $\boldf = (f_1, \ldots, f_m)$ of an element $g$ in a group $G$, choose an interval $I = [a, b] \subseteq \{1, \ldots, m\}$.  Then the result of cabling $\boldf$ at $I$ is the length-$(m - b + a)$ factorization
\[
(f_1, \ldots, f_{a - 1}, \quad f_a \cdots f_b, \quad f_{b + 1}, \ldots, f_m)
\]
of $g$ in $G$.  The action of the braid group $\BB_{m - b + a}$ on cabled factorizations lifts in a natural way to the action of the $m$-strand braid group on the original factorization: the result
\[
\begin{array}{rccl}
 \sigma_{a - 1} \cdot (f_1, \ldots, f_{a - 2}, &  f_{a - 1}, & f_a \cdots f_b, & f_{b + 1}, \ldots, f_m)
= \\
(f_1, \ldots, f_{a - 2}, &  f_a \cdots f_b, &  (f_{a-1})^{f_a \cdots f_b}, & f_{b + 1}, \ldots, f_m)
\end{array}
\]
of a single Hurwitz move is the cabling of 
\[
\sigma_{b - 1}\cdots\sigma_{a - 1}\cdot \boldf
=
(f_1, \ldots, f_{a - 2}, \quad f_a, \; \ldots, \; f_b, \quad (f_{a-1})^{f_a \cdots f_b}, \quad
f_{b + 1}, \ldots, f_m)
\]
at the interval $[a - 1, b - 1]$,
and similarly
\[
\begin{array}{rccl}
\sigma_a \cdot (f_1, \ldots,  f_{a - 1}, & f_a \cdots f_b, & f_{b + 1}, & f_{b + 2}, \ldots, f_m) = \\
(f_1, \ldots, f_{a - 1}, & f_{b+1}, & (f_a \cdots f_b)^{f_{b+1}}, & f_{b + 2}, \ldots, f_m)
\end{array}
\]
is the cabling of 
\[
\sigma_{a}\cdots\sigma_{b}\cdot \boldf
=
(f_1, \ldots, f_{a - 1}, \quad f_{b + 1}, \; (f_a)^{f_{b + 1}},  \; \ldots, \;  (f_b)^{f_{b + 1}}, \quad f_{b + 2}, \ldots, f_m)
\]
at the interval $[a + 1, b + 1]$.  This is illustrated in Figure~\ref{fig:cabling}.

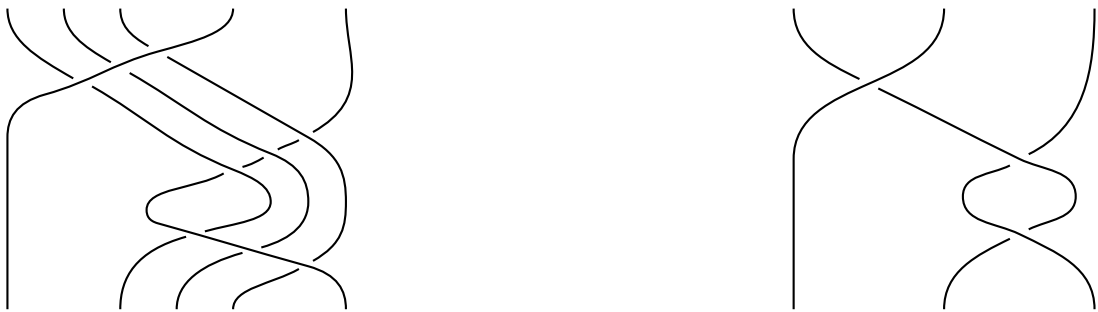
\begin{figure}

\begin{tikzpicture}[yscale=4/14, xscale=.5]
\draw[thick] (6, 14) to [out=-90, in=26] (4, 12) to [out=206, in=26] (1, 10) to [out=206, in=90] (0, 8) to [out=-90, in=90] (0, 0) ;

\draw[thick] (0, 14) to [out=-90, in=135] (1.75, 10.75);
\draw[thick] (2.25, 10.375) to [out=-45, in=145] (6, 6.5) to [out=-35, in=90] (7, 5) to [out=-90, in=26] (5.25, 3.625);
\draw[thick] (4.75, 3.375) to [out=206, in=90] (3, 0);

\draw[thick] (1.5, 14) to [out=-90, in=135] (2.75, 11.5);
\draw[thick] (3.25, 11) to [out=-45, in=145] (7, 7.25) to [out=-35, in=90] (8, 5) to [out=-90, in=26] (6.75, 2.875);
\draw[thick] (6.25, 2.625) to [out=206, in=90] (4.5, 0);

\draw[thick] (3, 14) to [out=-90, in=135] (3.75, 12.25);
\draw[thick] (4.25, 11.75) to [out=-45, in=135] (8, 8) to [out=-45, in=90] (9, 5) to [out=-90, in=45] (8.125, 2.25);
\draw[thick] (7.75, 1.875) to [out=225, in=90] (6, 0);

\draw[thick] (9, 14) to [out=-90, in=45] (8.125, 8.25);
\draw[thick] (7.75, 7.875) to [out=225, in=45] (7.1875, 7.4375);
\draw[thick] (6.8125, 7.0625) to [out=225, in=26] (6.25, 6.625);
\draw[thick] (5.75, 6.325) to [out=225, in=90] (3.7, 4.625) to [out=-90, in=154] (4, 4) to [out=-26, in = 154] (8, 2) to [out=-26, in=90] (9, 0);
\end{tikzpicture}
\hfill
\begin{tikzpicture}[scale=.5]
\draw[thick] (4, 8) to [out=-90, in=90] (0, 4) to [out=-90, in=90] (0, 0) ;
\draw[thick] (0, 8) to [out=-90, in=154] (1.75, 6.125);
\draw[thick] (2.25, 5.875) to [out=-26, in=154] (6, 4) to [out=-26, in=90] (7.5, 3) to [out=-90, in=26] (6.25, 2.125);
\draw[thick] (5.75, 1.875) to [out=206, in=90] (4, 0);
\draw[thick] (8, 8) to [out=-90, in=26] (6.25, 4.125);
\draw[thick] (5.75, 3.825) to [out=206, in=90] (4.5, 3) to [out=-90, in=154] (6, 2) to [out=-26, in=90] (8, 0);
\end{tikzpicture}

\caption{Cabling: Applying $(\sigma_4^{-1}\sigma_3^{-1}\sigma_2^{-1})(\sigma_2^{-1}\sigma_3^{-1}\sigma_4^{-1})(\sigma_1\sigma_2\sigma_3)$ to the factorization $(a, b, c, d, e)$ of $g = abcde$ (left) produces 
$(d, a^{ge^{-1} g^{-1}d}, b^{ge^{-1} g^{-1}d}, c^{ge^{-1} g^{-1}d}, e^{ g^{-1} d})$.  Applying the braid $(\sigma_2^{-1})(\sigma_2^{-1})(\sigma_1)$ (right) to the cabled factorization $(abc, d, e)$ produces  $(d, (abc)^{g e^{-1} g^{-1}d} , e^{g^{-1} d})$.
}
\label{fig:cabling}
\end{figure}

Next, we use the cabling of the diagonal factors, together with many repetitions of the moves
\begin{align*}
\left(
\begin{bmatrix}
z & 0 \\
0 & 1
\end{bmatrix},
\begin{bmatrix}
0 & z^b \\
z^{-b} & 0
\end{bmatrix},
\begin{bmatrix}
0 & z^b \\
z^{-b} & 0
\end{bmatrix}
\right)
& \overset{\sigma_2 \sigma_1}{\longrightarrow}
\left(
\begin{bmatrix}
0 & z^b \\
z^{-b} & 0
\end{bmatrix},
\begin{bmatrix}
0 & z^b \\
z^{-b} & 0
\end{bmatrix},
\begin{bmatrix}
z & 0 \\
0 & 1
\end{bmatrix}
\right) \\
& \overset{\sigma_1 \sigma_2}{\longrightarrow}
\left(
\begin{bmatrix}
z & 0 \\
0 & 1
\end{bmatrix},
\begin{bmatrix}
0 & z^{b-1} \\
z^{-b+1} & 0
\end{bmatrix},
\begin{bmatrix}
0 & z^{b-1} \\
z^{-b+1} & 0
\end{bmatrix}
\right),
\end{align*}
to make all of these $[(1\; 2); b]$ factors equal to $[(1\; 2); 0]$: once the first pair has been reduced to $[(1\; 2); 0]$, they can be cabled together (with product the identity) and moved to the end of the middle section of the factorization, allowing the next pair to be reduced.  

At this point, we have that $\til{\tt}$ has in its Hurwitz orbit a factorization 
\[
(\til{r}_1, \ldots, \til{r}_k, [(1\; 2), 0], [(1\; 2), 0], \ldots, [(1\; 2), 0], [(2\; 3), 0], \ldots [(n - 1\; n), 0])
\]
where $\til{r}_1, \ldots, \til{r}_k$ are diagonal reflections with nonzero weight in the first coordinate.
Finally, we use Hurwitz moves to permute the diagonal factors (which all commute with each other) into the following order: we first place all the reflections whose weight is congruent to $1$ modulo $d$ in order from smallest weight to largest, followed by those whose weight is congruent to $2$ modulo $d$ in order from smallest weight to largest, and so on.  The resulting factorization obviously depends only on the weights of $\til{r}_1, \ldots, \til{r}_k$, or equivalently only on the weights of the reflections in $\til{\tt}$.  Moreover, the image of this factorization under $\pi_d$ is in the same Hurwitz orbit as $\tt$ and is uniquely determined by the multiset of conjugacy classes of reflections in $\tt$.  Thus every factorization with the same multiset of conjugacy classes belongs to the same Hurwitz orbit as $\tt$.  This completes the proof.

\section{Exceptional groups}
\label{sec:exceptionals}

As mentioned in the introduction, the present author and Reiner have conjectured \cite[Conj.~6.3]{LReiner} that the Hurwitz action is ``as transitive as possible'' on reflection factorizations of a Coxeter element in any well generated complex reflection group.  Ideally, one would hope for a uniform proof of this statement.  However, even in the case of shortest factorizations (Theorem~\ref{thm:bessis} of Bessis), the only known proofs are case-by-case.  Below, we discuss the situation in more detail.

There are $34$ irreducible finite complex reflection groups not contained in the infinite family; in the Shephard--Todd classification, they are named $G_4, G_5, \ldots, G_{37}$.  Of the exceptional groups, $26$ are well-generated, including the six exceptional real reflection groups, of types $\type{H}_3$, $\type{H}_4$, $\type{F}_4$, $\type{E}_6$, $\type{E}_7$, and $\type{E}_8$ (respectively $G_{23}$, $G_{30}$, $G_{28}$, $G_{35}$, $G_{36}$, and $G_{37}$).  Thus, there are $20$ groups for which the question considered here makes sense.

In \cite{Zach} and \cite{Lazreq}, two groups of authors considered the smallest of the well generated exceptional groups, namely $G_4$, $G_5$, and $G_6$.  They proved in each case that the analogue of our Main Theorem is true, that is, that two reflection factorizations of a Coxeter element in one of these groups lie in the same Hurwitz orbit if and only if they have the same multiset of conjugacy classes.  The proofs in all cases are inductive: it is shown that for factorizations involving sufficiently many (say, $k$), factors, one can use an approach similar to the ``cabling'' strategy above to reduce the problem to considering factorizations of length $k - 1$, and then the result is established by exhaustive computation for short factorizations.  In principle, the same approach (particularly in the form used by Lazreq et al.) should work for all of the remaining groups; however, in practice, there is a huge gap between naive bounds for when the inductive step applies and what base cases are computationally feasible to check, even for groups of rank~$2$.  

Other approaches may be possible.  The proof of Theorem~\ref{thm:WY} in \cite{WY} is uniform, but makes heavy use of Coxeter-specific tools (the Coxeter length and Bruhat order) that do not have good analogues in the complex case.  In \cite{LReiner}, the main tool used was a lemma concerning the possible acute angles among the roots in a circuit (a minimal linearly dependent set) in a real root system.  It is conceivable that such an approach could be coupled with the techniques of \cite{Zach, Lazreq} to give a proof in at least some complex groups; but the root circuit lemma in \cite{LReiner} is ultimately proved via a brute-force computational attack even in Weyl groups.

Separately, one might hope to extend the investigation to factorizations of elements other than Coxeter elements.  There are three invariants attached to a tuple $(t_1, \ldots, t_m)$ of reflections in a complex reflection group $G$ that are easily seen to be preserved by the Hurwitz action:
\begin{itemize}
\item the product $t_1 \cdots t_m$ of the $t_i$;
\item the subgroup $H = \langle t_1, \ldots, t_m \rangle$ of $G$ generated by the $t_i$; and
\item the multiset $\big\{ \{ h t_i h^{-1} \colon h \in H\} \colon i = 1, \ldots, m\big\}$ of orbits of the $t_i$ under conjugation by $H$.
\end{itemize}
(For factorizations of a Coxeter element, the subgroup $H$ is always the full group $G$.)  In \cite{BergerDihedral}, it was shown that these invariants distinguish Hurwitz orbits when $G$ is a dihedral group.  One is tempted to conjecture that the same result is true for reflections in any complex reflection group.  This conjecture is consistent with the known results on Coxeter elements, as well as with the work of Kluitmann \cite{Kluitmann} and Ben-Itzhak--Teicher \cite{BenItzhak-Teicher} in the symmetric group.  Work-in-progress by  Minnick--Pirillo--Racile--Wang and J. Wang (respectively, for tuples of arbitrary length in small exceptional groups, and for tuples that constitute minimum-length factorizations of arbitrary elements in the infinite family; personal communications) also lend credence to the conjecture.

\section*{Acknowledgements}
The author is grateful to an anonymous referee for helpful comments.
This research was supported in part by an ORAU Powe Award and a Simons Collaboration Grant (634530).

	\bibliographystyle{amsalpha}
	\bibliography{Hurwitz-nomr}

\end{document}